\newtheorem{theorem}{Theorem}
\newtheorem{corollary}{Corollary}
\newtheorem{lemma}{Lemma}
\newtheorem{proposition}{Proposition}
\theoremstyle{remark}
\newtheorem{remark}{Remark}
\theoremstyle{definition}
\newtheorem*{Acknowledgment}{Acknowledgment}
\newcommand\supp{\mathop{\rm supp}}
\begin{document}

\title[A note on pseudoconvex hypersurfaces of infinite type]{A note on pseudoconvex hypersurfaces of infinite type in $\mathbb C^n$}

\author{John Erik Forn{\ae}ss and Ninh Van Thu\textit{$^{1,2}$}}

\thanks{The second author was supported by the Vietnam National Foundation for Science and Technology Development (NAFOSTED) under Grant Number 101.02-2017.311.}

\address{John Erik Forn{\ae}ss}
\address{Department of Mathematics, NTNU, Sentralbygg 2, Alfred Getz vei 1, 7491 Trondheim, Norway}
\email{john.fornass@ntnu.no}
\address{Ninh Van Thu}
\address{\textit{$^{1}$}~Department of Mathematics,   VNU University of Science, Vietnam National University at Hanoi, 334 Nguyen Trai, Thanh Xuan, Hanoi, Vietnam}
 \address{\textit{$^{2}$}~Thang Long Institute of Mathematics and Applied Sciences,
Nghiem Xuan Yem, Hoang Mai, HaNoi, Vietnam}
\email{thunv@vnu.edu.vn}

\subjclass[2010]{Primary 32T25; Secondary 32C25} \keywords{Holomorphic curve, real hypersurface, infinite type point.}
\begin{abstract}
The purpose of this article is to prove that there exists a real smooth  pseudoconvex hypersurface germ $(M,p)$ of  D'Angelo infinite type in $\mathbb C^{n+1}$ such that it does not admit any (singular) holomorphic curve in $\mathbb C^{n+1}$ tangent to $M$ at $p$ to infinite order. 
\end{abstract}
\maketitle
\allowdisplaybreaks

\section{Introduction}
Let $(M, p)$ be a smooth real hypersurface germ at $p$ in $\mathbb C^{n+1}$ and let $r$ be a local defining function for $M$ near $p$. Suppose that $(M, p)$ is of D'Angelo infinite type, i.e., there exists a sequence of nonconstant holomorphic curves $\gamma_m: (\mathbb C,0)\to (\mathbb C^{n+1}, p)$ such that $\dfrac{\nu (r\circ \gamma_m)}{\nu(\gamma_m)}\to +\infty$ as $m\to \infty$, where $\nu(f)$ denotes the vanishing order of $f$ at $0$. It is natural to ask whether there exists a nonconstant holomorphic curve $\gamma: (\mathbb C,0)\to (\mathbb C^{n+1}, p)$ tangent to $M$ at $p$ to infinite order, i.e. $\nu (r\circ \gamma)=+\infty$. 

This question plays a crucial role in the regularity of $\bar \partial$-Neumann problems over pseudoconvex domains (see \cite{Da82, Cat83, Cat84, Cat87, DK99}, and the references therein). The main results around this question are due to T. Bloom and I. Graham \cite{BG77}, L. Lempert and J. P. D'Angelo \cite{Da93, Lem86}, the first author and B. Stens{\o}nes \cite{FS12}, the first author, L. Lee and Y. Zhang \cite{FLZ14}, and K.-T. Kim and the second author \cite{Kim-Ninh}.
 
 If $(M, p)$ is real-analytic, it was shown that $M$ contains a nontrivial holomorphic curve $\gamma_\infty$ passing through $p$ (see  \cite{Da93, Lem86, FS12}). For the case when $(M,p)$ is a smooth real hypersurface in $\mathbb C^{n+1}$, the first author, L. Lee and Y. Zhang \cite{FLZ14} proved that there exists a formal complex curve in the hypersurface $M$ through $p$. Recently, K.-T. Kim and the second author \cite{Kim-Ninh} and T. K. S. Nguyen and V. T. Chu \cite{NC19} showed that in general there is no such a holomorphic curve. However, the hypersurfaces constructed in \cite{Kim-Ninh} and \cite{NC19} are not pseudoconvex.
 
 In this paper, we ensure that this result still holds even for higher-dimensional pseudoconvex hypersurfaces and for singular holomorphic curves. More precisely, we prove the following theorem.
 \begin{theorem}\label{T1} Let $n\geq 1$. There exists a smooth pseudoconvex real hypersurface germ $(M,0)$ of D'Angelo infinite type in $\mathbb C^{n+1}$ that does not admit any nonconstant holomorphic curve $\gamma : (\mathbb C,0)\to (\mathbb C^{n+1},0)$ tangent to $M$ at $0$ to infinite order.
 \end{theorem}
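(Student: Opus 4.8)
The plan is to produce $M$ completely explicitly and then verify, in turn, that it is smooth and pseudoconvex, that it is of D'Angelo infinite type, and that it admits no convergent tangent curve. I would look for a rigid defining function
\[
r(z,w)=\mathrm{Re}\,w+\varphi(z,\mathrm{Im}\,w),\qquad (z,w)=(z_1,\dots,z_n,w)\in\CC^{n+1},
\]
where $\varphi$ is smooth, vanishes to order $\geq 2$ at the origin, and — this point is forced — is \emph{not} everywhere nonnegative near $0$. Indeed, if $\varphi\geq 0$ then for any nonconstant holomorphic curve $\gamma=(a,b)$ with $b\not\equiv 0$ one has $r\circ\gamma\geq \mathrm{Re}\,b$, and along the ray where $\mathrm{Re}\,b$ is largest this is comparable to $|\zeta|^{\nu(b)}$, so $\nu(r\circ\gamma)\leq\nu(b)<\infty$; combined with the fact that $\varphi(\cdot,0)$ will be of finite type (so curves in the slice $\{w=0\}$ are also finite type), $M$ would be of \emph{finite} type. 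Hence the infinite type must be created by curves moving genuinely in the $\mathrm{Im}\,w$–direction, and $\varphi$ is a sign‑changing, mostly flat correction to a pluriharmonic or polynomial ``core'' $\varphi_0(z)$ (e.g.\ $\mathrm{Re}(z_1^N)$, or $|z_1|^{2N}$ together with $|z_2|^2+\dots+|z_n|^2$), chosen so that the slice $\{w=0\}$ is of finite type while the formal equation $r\circ\gamma=0$ has a formal solution with zero radius of convergence.

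For pseudoconvexity I would compute the Levi form of $r$ restricted to the complex tangent space $T^{\CC}_pM=\ker\partial r$. Since $dr(0)=d(\mathrm{Re}\,w)\neq 0$, $M$ is a smooth hypersurface near $0$; and because the non‑core part of $\varphi$ is flat, its contribution to the Levi form is dominated, near the origin, by $\partial_z\bar\partial_z$ of the subharmonic core (the cross terms and the $\partial_w\bar\partial_w$ terms being of strictly higher order on $T^{\CC}M$). Thus the Levi form stays $\geq 0$ (indeed $>0$ off the origin), so $\{r<0\}$ is pseudoconvex. This is the step where I must be careful to keep the flat correction quantitatively small relative to the core.

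For the D'Angelo infinite type I would invoke the philosophy behind \cite{FLZ14}: solving $\mathrm{Re}\,\hat b(\zeta)+\varphi(\hat a(\zeta),\mathrm{Im}\,\hat b(\zeta))=0$ order by order produces a formal curve $\hat\gamma=(\hat a,\hat b)$ (normalized so that $\hat a$ has order $1$), and $\varphi$ is arranged precisely so that the recursion forces the Taylor coefficients of $\hat\gamma$ to grow so fast that $\hat\gamma$ diverges. The polynomial truncations $\gamma_m$ of $\hat\gamma$ are genuine holomorphic curves with $\nu(\gamma_m)$ bounded but $\nu(r\circ\gamma_m)\to\infty$, so $\nu(r\circ\gamma_m)/\nu(\gamma_m)\to\infty$ and $(M,0)$ is of D'Angelo infinite type. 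The higher dimensional statement ($n\geq 1$) is then obtained uniformly, e.g.\ by appending $|z_2|^2+\cdots+|z_n|^2$ to $r$, reducing everything to the two–variable heart of the construction.

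The main obstacle is the last property: that $M$ admits no nonconstant \emph{convergent} holomorphic curve tangent to infinite order. I would argue by cases on the $w$–component $b$ of $\gamma=(a,b)$. If $b\equiv 0$, then $r\circ\gamma=\varphi(a(\zeta),0)$, which has finite vanishing order because $\varphi(\cdot,0)$ is of finite type; contradiction. If $b\not\equiv 0$, then the equation ``$\mathrm{Re}\,b(\zeta)+\varphi(a(\zeta),\mathrm{Im}\,b(\zeta))$ vanishes to infinite order'' must be analyzed coefficient by coefficient: using the explicit shape of $\varphi$ one shows that, after normalizing the parameter, the Taylor coefficients of $\gamma$ are uniquely determined and coincide with those of the formal solution $\hat\gamma$ above — so the hypothetical convergent $\gamma$ would be $\hat\gamma$, contradicting that $\hat\gamma$ has radius of convergence $0$. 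The delicate part here is bookkeeping: one must make the recursion rigid enough that there are no spurious branches pinning down a different (possibly convergent) curve, while simultaneously keeping $\varphi$ small enough that the pseudoconvexity established above is not destroyed — so the quantitative choices (the growth of the coefficients defining the flat part of $\varphi$, the divergence rate of $\hat\gamma$, and the margin of positivity of the Levi form) all have to be balanced against one another.
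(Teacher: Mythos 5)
Your proposal is a plan rather than a proof, and the two steps you defer --- ``keep the flat correction quantitatively small'' so that pseudoconvexity survives, and ``make the recursion rigid enough that there are no spurious branches'' so that every convergent tangent curve is forced to coincide with a divergent formal one --- are not bookkeeping: they are the entire content of the theorem. In particular, a formal curve tangent to $M$ to infinite order is in general far from unique (for each finite order there is a large family of curves achieving that tangency), so the strategy ``the hypothetical convergent $\gamma$ must equal the divergent $\hat\gamma$'' does not close without a quantitative argument that excludes \emph{all} convergent curves simultaneously; and the tension you yourself identify between a sign-changing flat perturbation in the $\mathrm{Im}\,w$-direction and nonnegativity of the Levi form is exactly the obstruction that defeated the earlier (non-pseudoconvex) examples of this type. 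As written, the proposal does not resolve either difficulty.

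Moreover, your structural claim that the infinite type ``must be created by curves moving genuinely in the $\mathrm{Im}\,w$-direction,'' so that $\varphi$ has to depend on $\mathrm{Im}\,w$, is incorrect, and the paper's construction shows why: it takes the \emph{rigid} hypersurface $\mathrm{Re}(w)+f_1(z_1)+\cdots+f_n(z_n)=0$, where each $f_j$ is a one-variable $\mathcal C^\infty$ \emph{subharmonic} function whose Taylor series at $0$ is the harmonic formal series $\sum_m \mathrm{Re}(a^j_m z^m)$ with very rapidly growing $a^j_m$. Pseudoconvexity is then automatic (no balancing of a flat correction against a core is needed), and the infinite type is witnessed not by $\mathrm{Im}\,w$-motion but by the complex hypersurfaces $w=-\sum_{k\le m}\sum_j a^j_k z_j^k$, i.e.\ by letting the holomorphic $\mathrm{Re}\,w$ cancel the formally pluriharmonic jet of $\varphi$ to each finite order. (Your observation that $\varphi$ cannot be everywhere nonnegative is consistent with this: the paper's $\varphi$ does change sign, since its jet begins with $\mathrm{Re}(a^j_1 z_j)$.) The non-existence of a convergent tangent curve is then a direct coefficient estimate: equating Taylor coefficients in $\mathrm{Re}(g)+\sum_j f_j(h_j)=o(t^\infty)$ and using $|\alpha^j_m|\le 1$ yields $|a^q_k|\le k^k\bigl(k^{2k}(\sum_{m<k}a^q_m+\sum_{j\ne q}\sum_{m\le k}a^j_m)+1\bigr)$ for all large $k$, which the growth condition on the $a^j_m$ contradicts. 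If you want to salvage your route, you would need to supply explicit choices making both of your deferred steps work; the paper's route shows they can be avoided entirely.
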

Theorem \ref{T1} is a crucial consequence of the main result of this paper. In order to state our main result, let us recall the notion of points of Bloom-Graham type.  A point $p\in M \subset \mathbb C^{n+1}$ is of Bloom-Graham type $m$ ($m$ is a positive integer or $+\infty$) if $m$ is the supremum of the orders of tangency of $M$ and codimension one complex submanifolds of $\mathbb C^{n+1}$ at $p$. It was proved in \cite[Theorem $2.4$]{BG77} that a point $p\in M$ is of Bloom-Graham type $m$ if and only if $p$ is of type $m$ in the sense of J. J. Kohn, defined in terms of iterated commutators of vector fields. We remark here that for smooth real hypersurfaces in $\mathbb C^2$, D'Angelo finite type, Bloom-Graham finite type, and Kohn finite type are equivalent. For various notions of points of finite type and their relationships to subelliptic estimates, we refer the reader to \cite{DK99} and the references therein. 

Let $(M, p)$ be a smooth real hypersurface germ at $p$ in $\mathbb C^{n+1}$. Suppose that $(M, p)$ is of Bloom-Graham infinite type. Then, $(M, p)$ is also of D'Angelo infinite type (see Lemma \ref{BL-1} in Section \ref{S3}). As for the above mentioned notion of D'Angelo infinite type, it is natural to ask whether there exists a nonconstant holomorphic curve $\gamma: (\mathbb C,0)\to (\mathbb C^{n+1}, p)$ tangent to $M$ at $p$ to infinite order if $(M,p)$ is of Bloom-Graham infinite type. In \cite[Counterexamples $2.14$]{BG77}, T. Bloom and I. Graham introduced a smooth real hypersurface germ $(M,0)$ of  infinite type in $\mathbb C^2$ that does not admit any complex submanifold tangent to $M$ at the origin to infinite order. 

Following the argument given in \cite[Counterexamples $2.14$]{BG77}, one sees that the hypersurface constructed in \cite{BG77} may not be pseudoconvex. In this paper, we show that there exists a smooth pseudoconvex real hypersurface germ $(M, p)$ of Bloom-Graham infinite type in $\mathbb C^{n+1}$ such that there is no nonconstant holomorphic curve in $\mathbb C^{n+1}$ tangent to $M$ at $p$ to infinite order. Namely, we prove the following theorem as our main result.
 \begin{theorem}\label{T4} Let $n\geq 1$. There exists a smooth pseudoconvex real hypersurface germ $(M,0)$ of Bloom-Graham infinite type in $\mathbb C^{n+1}$ that does not admit any nonconstant holomorphic curve in $\mathbb C^{n+1}$ tangent to $M$ at $0$ to infinite order.
 \end{theorem}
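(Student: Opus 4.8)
The plan is to build $(M,0)\subset\CC^{n+1}$ explicitly in rigid form. Fix the rapidly growing sequence $a_l=2^{2^l}$ ($l\ge2$) and let $f$ be the (formal, divergent) series $f(w)=\tfrac12\sum_{l\ge2}a_lw^l$. First I would produce a smooth subharmonic germ $\phi$ at $0\in\CC$ with $\phi(0)=0$ whose formal Taylor series is $T_0\phi=2\operatorname{Re}f=\sum_{l\ge2}a_l\operatorname{Re}(w^l)$; in particular $\Delta\phi$ is flat at $0$ because the complex Hessian of a (formal) pluriharmonic series vanishes. Such a $\phi$ exists: by Borel's theorem pick a smooth $\psi$ with $T_0\psi=\sum_{l\ge2}a_l\operatorname{Re}(w^l)$, so $\Delta\psi$ is flat at $0$; choose a smooth nonnegative radial function $v$, flat at $0$, with $v\ge|\Delta\psi|$ near $0$; let $\eta$ be the radial solution of $\Delta\eta=v$ with $\eta(0)=0$, which one checks is flat at $0$; and set $\phi=\psi+\eta$, so $\phi$ is smooth, $\phi(0)=0$, $T_0\phi=T_0\psi$, and $\Delta\phi=\Delta\psi+v\ge0$. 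Then set
\[
M=\bra{\,z=(z_1,\dots,z_{n+1})\in\CC^{n+1}\ :\ \rho(z):=\operatorname{Re}z_{n+1}+\sum_{j=1}^n\phi(z_j)=0\,}.
\]

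The germ $(M,0)$ is pseudoconvex because $\rho$ is plurisubharmonic near $0$: $\operatorname{Re}z_{n+1}$ is pluriharmonic and the complex Hessian of $\sum_j\phi(z_j)$ is the diagonal matrix with entries $\tfrac14\Delta\phi\ge0$, so $\{\rho<0\}$ is pseudoconvex with smooth boundary $M$ at $0$. For Bloom--Graham infinite type, fix $m$, put $g_m(w)=\tfrac12\sum_{2\le l<m}a_lw^l$, and let $N_m=\bra{z_{n+1}=-2\sum_{j=1}^ng_m(z_j)}$, a smooth complex hypersurface through $0$. Parametrizing $N_m$ by $(z_1,\dots,z_n)$ gives $\rho|_{N_m}=\sum_{j=1}^n\bigl(\phi(z_j)-2\operatorname{Re}g_m(z_j)\bigr)$; since $T_0(\phi-2\operatorname{Re}g_m)=\sum_{l\ge m}a_l\operatorname{Re}(w^l)$ vanishes to order $m$, the restriction vanishes to order $m$ at $0$, so $N_m$ is tangent to $M$ at $0$ to order $\ge m$. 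Letting $m\to\infty$ shows $(M,0)$ is of Bloom--Graham infinite type, and hence of D'Angelo infinite type.

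It remains to rule out infinite-order tangency of holomorphic curves. Suppose $\gamma=(z_1,\dots,z_{n+1}):(\CC,0)\to(\CC^{n+1},0)$ is holomorphic with $\rho\circ\gamma$ vanishing to infinite order at $0$, i.e.\ $T_0(\rho\circ\gamma)=0$. By the chain rule for formal Taylor series (legitimate since $z_j(0)=0$) together with $T_0\phi=2\operatorname{Re}f$,
\[
T_0(\rho\circ\gamma)=2\operatorname{Re}\paren{\tfrac12 z_{n+1}(\zeta)+\sum_{j=1}^n f\bigl(z_j(\zeta)\bigr)},
\]
where $f(z_j(\zeta))$ denotes formal composition. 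A real part of a power series in $\zeta$ alone vanishes only if that series is an imaginary constant, which here must be $0$ (its constant term is $0$); hence $\sum_{j=1}^n f(z_j(\zeta))=-\tfrac12 z_{n+1}(\zeta)$, a \emph{convergent} power series. I claim this forces every $z_j\equiv0$ ($1\le j\le n$). If not, write $z_j(\zeta)=b_j\zeta^{k_j}+\cdots$ with $b_j\neq0$ for $j$ in the nonempty set $J$ where $z_j\not\equiv0$, set $k_*=\min_{j\in J}k_j$, $J_*=\{j\in J:k_j=k_*\}$, and look at $[\zeta^{mk_*}]\sum_jf(z_j(\zeta))$. Its ``main term'' is $a_m p_m$ with $p_m=\sum_{j\in J_*}b_j^m$, while every other contribution comes from indices $m'\le m-1$, so by Cauchy estimates it is $\le nm\,a_{m-1}C^m$ for a constant $C$ depending on the $z_j$'s. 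Since the $b_j$ are nonzero, the power sums obey $\limsup_m|p_m|^{1/m}=\max_{j\in J_*}|b_j|=:R>0$ (the generating function $\sum_m p_m x^m$ is rational with a genuine pole of modulus $1/R$), so $|p_m|\ge(R/2)^m$ for infinitely many $m$; because $a_{m-1}/a_m=2^{-2^{m-1}}$ crushes $nm\,(2C/R)^m$, for those $m$ we get $\bigl|[\zeta^{mk_*}]\sum_jf(z_j(\zeta))\bigr|\ge\tfrac12 a_m(R/2)^m$. Since $a_m^{1/(mk_*)}=2^{2^m/(mk_*)}\to\infty$, the series $\sum_jf(z_j(\zeta))$ has radius of convergence $0$ — contradicting convergence. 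Hence $z_1\equiv\cdots\equiv z_n\equiv0$, so $z_{n+1}\equiv0$ and $\gamma$ is constant, which proves the theorem.

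The conceptual heart is the choice of $\phi$: its Taylor series must be harmonic (so the Levi form is flat and $(M,0)$ is of infinite type) yet its ``holomorphic part'' $f$ must diverge — a genuinely flat $\phi$ would instead make the line $\zeta\mapsto(\zeta,0,\dots,0)$ tangent to $M$ at $0$ to infinite order. The main technical obstacle is the final divergence estimate: the growth of $(a_l)$ must be fast enough that after \emph{any} nonconstant holomorphic substitution the leading coefficient still dominates all lower-order and cross terms and survives the cancellations recorded by the power sums $p_m$, which is exactly why a doubly-exponential rate together with the bound $|p_m|\ge(R/2)^m$ enters.
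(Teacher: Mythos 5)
Your proof is correct, and it departs from the paper's argument at both of its main steps. For the construction of a smooth subharmonic function with prescribed divergent harmonic Taylor series, the paper builds the function explicitly as $\sum_m (u_m+v_m)$ with $u_m=\chi(\epsilon_m^2|z|^2)\,{\mbox{Re}}(a_mz^m)$ and a logarithmic convex correction $v_m$, checking $\mathcal C^k$-convergence by hand; you instead invoke Borel's theorem and repair subharmonicity with a single flat radial potential $\eta$ solving $\Delta\eta=v\ge|\Delta\psi|$. Your route is shorter and makes transparent that the only real constraint is flatness of $\Delta\psi$, while the paper's is self-contained and quantitative. For the non-existence of infinite-order tangent curves, the paper uses $n$ \emph{different} coefficient sequences $\{a^j_m\}$ tied together by the staggered domination condition (\ref{5-10-2019-1}), precisely so that after substituting $\zeta^{km^*}$ the single term $a^q_k(\alpha^q_{m^*})^k$ visibly dominates; you use one doubly-exponential sequence for all coordinates and must therefore confront possible cancellation among the leading coefficients $b_j$ of the components with minimal vanishing order. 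Your power-sum lemma ($\limsup_m|p_m|^{1/m}=\max_{j\in J_*}|b_j|$, via the non-cancelling poles of $\sum_j b_jx/(1-b_jx)$) is exactly the right tool for this, and combined with the Cauchy estimate on the lower-order contributions it yields radius of convergence $0$ for $\sum_j f(z_j(\zeta))$, contradicting its identification with the convergent series $-\tfrac12 z_{n+1}(\zeta)$. This is a genuinely different, and in some ways cleaner, mechanism: the paper trades the cancellation issue for a more elaborate choice of coefficients, whereas you trade a simpler hypersurface for a small amount of extra analysis. Both arguments correctly cover singular curves (your $k_*$ plays the role of the paper's $m^*$) and both reduce the remaining cases ($z_j\equiv0$ for all $j\le n$, or $z_{n+1}\not\equiv0$) to the vanishing of the real part of a holomorphic formal series. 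The only nitpicks are cosmetic: the lower bound in your final display should read $\tfrac14 a_m(R/2)^m$ after subtracting the error term, and the flatness of the radial primitive $\eta$ deserves the one-line estimate $\eta(r)=\int_0^r s^{-1}\int_0^s v(t)\,t\,dt\,ds=O(r^{N})$ for all $N$; neither affects the argument.
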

The proof of  Theorem \ref{T4} is split into several steps. First, for an increasing sequence of positive real numbers  $\{a_m\}_{m=1}^\infty $, we construct a $\mathcal{C}^\infty$-smooth subharmonic function $f$ on $\mathbb C$ such that its Taylor series at the origin is exactly $\sum\limits_{m=1}^\infty  {\mbox{Re}} (a_m z^m)$ (cf. Proposition \ref{P1} in Section \ref{S2}). Next, choose $n$ suitable sequences of positive real numbers $\{a^1_m\}_{m=1}^\infty, \ldots,\{a^n_m\}_{m=1}^\infty$ and let $f_1,\ldots, f_n$ be $\mathcal{C}^\infty$-smooth subharmonic functions $\mathbb C$ constructed as in Proposition \ref{P1} with respect to these sequences. Then, the desired hypersurface $M$ is defined by
  $$
  M=\left\{(z,w)\in \mathbb C^{n+1}\colon {\mbox{Re}} (w)+f_1(z_1)+\cdots+f_n(z_{n})=0\right\}.
  $$
 
 As a consequence of Theorem \ref{T4}, we obtain the following corollary. 
 \begin{corollary}\label{Cor1} Let $n\geq 1$. There exists a smooth pseudoconvex real hypersurface germ $(M,0)$ of Bloom-Graham infinite type in $\mathbb C^{n+1}$ that does not admit any $n$-dimensional complex submanifold tangent to $M$ at $0$ to infinite order.
 \end{corollary}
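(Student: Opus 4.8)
The plan is to derive Corollary~\ref{Cor1} from Theorem~\ref{T4} with no new construction: I would take the very same hypersurface $M$ produced in Theorem~\ref{T4} and argue by contradiction, showing that an $n$-dimensional complex submanifold of $\mathbb C^{n+1}$ tangent to $M$ at $0$ to infinite order yields a \emph{nonconstant holomorphic curve} with the same tangency property, which Theorem~\ref{T4} forbids.

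Concretely, let $r$ be a local defining function of $M$ near $0$ and suppose $V\subset\mathbb C^{n+1}$ is an $n$-dimensional complex submanifold through $0$ with $r|_V$ vanishing to infinite order at $0$. I would pick a holomorphic parametrization $\varphi\colon(\mathbb C^n,0)\to(\mathbb C^{n+1},0)$ of $V$ near $0$ with $\varphi(0)=0$ and $d\varphi_0$ injective, so that $r\circ\varphi$ is a smooth function on a neighborhood of $0\in\mathbb C^n$ that is flat at $0$; thus for every integer $N\geq 1$ there is a constant $C_N>0$ with $|(r\circ\varphi)(x)|\leq C_N|x|^N$ for $x$ near $0$.

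Then I would fix any nonzero vector $v\in\mathbb C^n$ and set $\gamma(\zeta):=\varphi(\zeta v)$, a holomorphic map $(\mathbb C,0)\to(\mathbb C^{n+1},0)$. It is nonconstant since $\gamma'(0)=d\varphi_0(v)\neq 0$, and for every $N$ one has $|(r\circ\gamma)(\zeta)|=|(r\circ\varphi)(\zeta v)|\leq C_N|v|^N|\zeta|^N$, so $\nu(r\circ\gamma)\geq N$ for all $N$, i.e. $\nu(r\circ\gamma)=+\infty$. This contradicts Theorem~\ref{T4}, so no such $V$ exists; since the hypersurface of Theorem~\ref{T4} is smooth, pseudoconvex and of Bloom-Graham infinite type, the corollary follows. (For $n=1$ the statement is already contained in Theorem~\ref{T4}, a one-dimensional complex submanifold being in particular a nonconstant holomorphic curve.)

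There is no genuine obstacle here: the whole content is ``slice the submanifold by a complex line through the origin''. The only points requiring care are that the slice $\gamma$ is genuinely nonconstant --- which is exactly where the regularity of $V$ at $0$ enters, via injectivity of $d\varphi_0$ --- and that flatness of $r\circ\varphi$ at $0\in\mathbb C^n$ passes to each one-dimensional slice $\zeta\mapsto\varphi(\zeta v)$, which is immediate from the polynomial Taylor estimates above.
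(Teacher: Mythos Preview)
Your argument is correct and follows essentially the same route as the paper: take the hypersurface from Theorem~\ref{T4}, assume an $n$-dimensional complex submanifold tangent to infinite order exists, and slice it by a complex line through $0$ to produce a nonconstant holomorphic curve tangent to infinite order, contradicting Theorem~\ref{T4}. The only cosmetic difference is that the paper slices by intersecting with the specific coordinate $2$-plane $\{z_2=\cdots=z_n=0\}$ in the ambient space, whereas you parametrize $V$ and restrict to a line $\zeta\mapsto\zeta v$ in the parameter space; your version makes the nonconstancy of the resulting curve more transparently a consequence of the injectivity of $d\varphi_0$.
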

\section{Construction of a $\mathcal{C}^\infty$-smooth subharmonic function}\label{S2}
In this section, we shall prove the following proposition.
 \begin{proposition} \label{P1}
 Let $\{a_m\}_{m=1}^\infty $ be an increasing sequence of positive real numbers. Then, there exists a $\mathcal{C}^\infty$-smooth subharmonic function $f$ on $\mathbb C$ satisfying that its Taylor series at the origin is exactly $\sum\limits_{m=1}^\infty  {\mbox{Re}} (a_m z^m)$.
 \end{proposition}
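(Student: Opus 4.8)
The plan is to build $f$ as an infinite sum of smooth bumps, where the $m$-th bump contributes the harmonic polynomial $\mathrm{Re}(a_m z^m)$ near the origin but is cut off far away so that the series converges to a smooth function, and at the same time the cutoff is arranged so that each term remains subharmonic on all of $\mathbb{C}$. Concretely, I would fix a smooth radial cutoff $\chi:\mathbb{C}\to[0,1]$ with $\chi\equiv 1$ on $|z|\le 1$ and $\chi\equiv 0$ on $|z|\ge 2$, and set $f(z)=\sum_{m=1}^\infty \mathrm{Re}(a_m z^m)\,\chi(z/\epsilon_m)$ for a rapidly decreasing sequence $\epsilon_m\downarrow 0$ chosen below. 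On the ball $|z|<\epsilon_{m}$ the $m$-th term is exactly $\mathrm{Re}(a_m z^m)$, so on any fixed neighborhood of $0$ only finitely many terms differ from their harmonic polynomial, and all derivatives of $f$ at $0$ agree with those of $\sum_m \mathrm{Re}(a_m z^m)$; this gives the Taylor series statement once smoothness is established. Smoothness on $\mathbb{C}\setminus\{0\}$ is automatic (locally finite sum); smoothness at $0$ follows by choosing $\epsilon_m$ small enough that, say, $\sup_{\mathbb{C}}|D^\alpha(\mathrm{Re}(a_m z^m)\chi(z/\epsilon_m))|\le 2^{-m}$ for all $|\alpha|\le m$, which is possible because differentiating a polynomial of degree $m$ times a cutoff supported in $|z|\le 2\epsilon_m$ produces a bound that is $O(\epsilon_m^{1-|\alpha|+m}\cdot\epsilon_m^{-k})$ and hence $\to 0$ as $\epsilon_m\to 0$ for each fixed multi-index once $\epsilon_m$ is small relative to $a_m$.

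The subtle point is subharmonicity. Each summand $u_m(z):=\mathrm{Re}(a_m z^m)\chi(z/\epsilon_m)$ is harmonic on $|z|<\epsilon_m$ (where $\chi=1$) and identically zero on $|z|>2\epsilon_m$, but on the annulus $\epsilon_m<|z|<2\epsilon_m$ it is a product of a harmonic function and a cutoff, which need not be subharmonic. So instead I would modify the construction: replace the naive cutoff product by $u_m(z)=\mathrm{Re}(a_m z^m)+c_m\,\psi_m(z)$, where $\psi_m$ is chosen so that outside a small ball the term $c_m\psi_m$ cancels the growth of $\mathrm{Re}(a_m z^m)$ while keeping the sum subharmonic. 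A cleaner route, and the one I would actually pursue, is to exploit that $\mathrm{Re}(a_m z^m)\ge -a_m|z|^m$ and to "dominate from below by a convex radial function": take $f(z)=\sum_m \big(\mathrm{Re}(a_m z^m)\,\chi(z/\epsilon_m)+a_m\lambda_m(|z|)\big)$, where $\lambda_m$ is a smooth convex increasing function of $r=|z|$ with $\lambda_m''\ge$ (enough to dominate the negative part of the Laplacian of the cutoff term on the bad annulus), $\lambda_m\equiv 0$ for $r\le \epsilon_m$, so that the Taylor series at $0$ is unaffected. Since $\Delta\big(\lambda_m(|z|)\big)=\lambda_m''+\lambda_m'/r$ can be made as large as we please on $\epsilon_m\le |z|\le 2\epsilon_m$ while staying $\ge 0$ everywhere, and the cutoff term's Laplacian is bounded there (by a constant depending on $a_m,\epsilon_m$), each summand $\mathrm{Re}(a_m z^m)\chi(z/\epsilon_m)+a_m\lambda_m(|z|)$ becomes subharmonic on all of $\mathbb{C}$; summing subharmonic functions (with the $C^\infty$ convergence already secured) yields a subharmonic $f$.

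The main obstacle is making the two requirements compatible: the smallness of $\epsilon_m$ needed for $C^\infty$ convergence of the series, versus the size of the correction $\lambda_m$ needed to restore subharmonicity on the annulus $\epsilon_m\le|z|\le 2\epsilon_m$ — the correction's contribution and all its derivatives must also be summable. I expect this to work out because the problematic Laplacian of the cutoff term on that annulus is $O(a_m\epsilon_m^{m-2})$ (an $m$-th degree polynomial, second derivatives of a cutoff scaled by $\epsilon_m$), so one needs $\lambda_m''\gtrsim a_m\epsilon_m^{m-2}$ on a set of radius $\sim\epsilon_m$, giving $\lambda_m=O(a_m\epsilon_m^{m})$ and derivatives of $\lambda_m$ of order $k$ of size $O(a_m\epsilon_m^{m-k})$; choosing $\epsilon_m$ so small that $a_m\epsilon_m^{m-k}\le 2^{-m}$ for all $k\le m$ simultaneously controls both the cutoff terms and the corrections, so the diagonal argument closes. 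I would therefore first fix $\chi$ and the shape of $\lambda_m$, then state the estimate $\|D^\alpha(\text{summand})\|_{C^0}\le 2^{-m}$ for $|\alpha|\le m$ as the single inequality that determines how small each $\epsilon_m$ must be, and finally verify subharmonicity summand-by-summand and the Taylor expansion term-by-term.
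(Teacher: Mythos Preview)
Your proposal is essentially the paper's own proof: cut off each $\mathrm{Re}(a_m z^m)$ by a smooth radial bump at a shrinking radius, then repair subharmonicity on the transition annulus by adding a nonnegative radial subharmonic correction that is flat at $0$ (the paper uses $C\,\frac{m a_m}{\epsilon_m^m}\,\Lambda(\log|z|^2+2\log\epsilon_m)$ with $\Lambda$ convex in place of your $a_m\lambda_m(|z|)$), and choose the scales so that the series converges in every $C^k$. One small slip to fix: your condition $a_m\epsilon_m^{m-k}\le 2^{-m}$ for all $k\le m$ is impossible at $k=m$ (it would force $a_m\le 2^{-m}$); you only need it for $k\le m-1$, which suffices since for each fixed $k$ only the tail $m>k$ matters for $C^k$ convergence.
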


In order to give a proof of Proposition \ref{P1}, we need following lemmas. First of all, denote by $\chi$ a nonnegative $\mathcal{C}^\infty$-smooth cut-off function on $\mathbb R$ such that
\[  
  \chi(t)=\begin{cases}
1 ~&~\text{if}~t<1/4\\
0~&~\text{if}~t>1.
  \end{cases}
  \]
Let $\{a_m\}_{m=1}^\infty $be a given increasing sequence of positive real numbers. Denote by $\{\epsilon_m\}_{m=1}^\infty $ an increasing sequence of positive real numbers such that $\epsilon_m\geq\max\{m,a_m^{2/m}\}$ for every $m=1,2,\ldots$. Then, for each $m=1,2,\ldots$, denote $u_m(z)$ by setting
$$
u_m(z):=  \chi\left(\epsilon_m^2 |z|^2\right){\mbox{Re}}\left (a_mz^m \right).
$$
Then, we have 
\begin{equation*}
\begin{split}
\frac{\partial^2 u_m}{\partial z\partial \bar z}(z)&=   \epsilon_m^2 \chi'\left(\epsilon_m^2|z|^2\right){\mbox{Re}} \left (a_mz^m \right)+\epsilon_m^4 |z|^2 \chi''\left(\epsilon_m^2|z|^2\right){\mbox{Re}}\left (a_mz^m \right)\\
&+ m\epsilon_m^2\chi'\left(\epsilon_m^2|z|^2\right) {\mbox{Re}} \left (a_mz^m \right)\\
&=\epsilon_m^2(m+1)\chi'\left(\epsilon_m^2|z|^2\right){\mbox{Re}}\left (a_mz^m \right)+  \epsilon_m^4 |z|^2 \chi''\left(\epsilon_m^2|z|^2\right){\mbox{Re}} \left (a_mz^m \right)
\end{split}
\end{equation*}
for all $z\in \mathbb C^*$. Thus, one has the following lemma.
\begin{lemma} \label{L1} For each $m=1,2,\ldots$, the following assertions hold:
\begin{itemize}
\item[i)] $\Delta u_m(z)=0$ for all $z\in \mathbb C$ with $|z|\leq \dfrac{1}{2\epsilon_m}$ or $|z|\geq \dfrac{1}{\epsilon_m}$,
\item[ii)] $\left |\Delta u_m(z)\right |\lesssim \dfrac{m a_m\epsilon_m^2}{\epsilon_m^m}$ for all $z\in \mathbb C$ with $\dfrac{1}{2\epsilon_m}<|z|<\dfrac{1}{\epsilon_m}$, where the constant is independent of $m$.
\end{itemize}
\end{lemma}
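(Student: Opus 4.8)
The plan is to extract both assertions directly from the displayed formula for $\partial^2 u_m/\partial z\partial\bar z$, together with the identity $\Delta=4\,\partial^2/\partial z\partial\bar z$, once one has pinned down exactly where the cut-off $t\mapsto\chi(t)$ and its first two derivatives vanish. The one structural observation needed is that a $\mathcal{C}^\infty$ function which is constant on an open interval has all of its derivatives of order $\geq 1$ equal to zero on the \emph{closure} of that interval (by continuity of each derivative). Applied to $\chi$, this gives $\chi'(t)=\chi''(t)=0$ for every $t\leq 1/4$, and $\chi(t)=\chi'(t)=\chi''(t)=0$ for every $t\geq 1$.

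For assertion (i): if $z\neq 0$ and either $|z|\leq\frac1{2\epsilon_m}$ or $|z|\geq\frac1{\epsilon_m}$, then $t:=\epsilon_m^2|z|^2$ satisfies $t\leq\frac14$ or $t\geq 1$, so in either case $\chi'(t)=\chi''(t)=0$; by the displayed identity $\partial^2u_m/\partial z\partial\bar z(z)=0$, hence $\Delta u_m(z)=0$. The only remaining point is $z=0$, where that identity does not apply: on the disk $|z|<\frac1{2\epsilon_m}$ the cut-off $\chi(\epsilon_m^2|z|^2)$ equals $1$, so there $u_m(z)={\mbox{Re}}(a_mz^m)$, which is harmonic; hence $\Delta u_m(0)=0$.

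For assertion (ii), on the annulus $\frac1{2\epsilon_m}<|z|<\frac1{\epsilon_m}$ I would simply take absolute values in the displayed identity. Put $C:=\max\{\sup_{\mathbb R}|\chi'|,\ \sup_{\mathbb R}|\chi''|\}$, which is finite since $\chi\in\mathcal{C}^\infty$ and $\chi',\chi''$ are supported in $[1/4,1]$. Using $|z|<\frac1{\epsilon_m}$, and hence $|z|^m<\epsilon_m^{-m}$ and $\epsilon_m^4|z|^2<\epsilon_m^2$, one gets
$$
\left|\frac{\partial^2u_m}{\partial z\partial\bar z}(z)\right|\leq (m+1)\,\epsilon_m^2\,C\,a_m\,\epsilon_m^{-m}+\epsilon_m^{2}\,C\,a_m\,\epsilon_m^{-m}=(m+2)\,C\,\frac{a_m\epsilon_m^2}{\epsilon_m^m}\lesssim\frac{m\,a_m\,\epsilon_m^2}{\epsilon_m^m},
$$
with implied constant independent of $m$; multiplying by $4$ gives the stated bound on $|\Delta u_m(z)|$.

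I do not expect any genuine obstacle here. The two mildly delicate points are the boundary radii $|z|=\frac1{2\epsilon_m}$ and $|z|=\frac1{\epsilon_m}$, which are covered by the vanishing-at-the-endpoint observation above, and the value at $z=0$, covered by the separate harmonicity remark. Note that the hypotheses $\epsilon_m\geq m$ and $\epsilon_m\geq a_m^{2/m}$ are not needed for this lemma; they will only enter later, when the $u_m$ are summed to build the function $f$ of Proposition \ref{P1} and one must control $\sum_m|\Delta u_m|$ as well as the remainders of its Taylor expansion at $0$.
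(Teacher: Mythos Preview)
Your proposal is correct and follows exactly the paper's approach: the paper does not give a separate proof of this lemma at all, but simply reads both assertions off the displayed formula for $\partial^2 u_m/\partial z\partial\bar z$ (``Thus, one has the following lemma''). Your write-up merely supplies the routine details---the endpoint vanishing of $\chi',\chi''$, the separate treatment of $z=0$, and the absolute-value estimate on the annulus---that the paper leaves implicit.
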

Next, let us denote by $\Lambda$ a $\mathcal{C}^\infty$-smooth convex function on $\mathbb R$ such that 
\begin{itemize}
\item[a)] $\Lambda(x)=0$ if $x\leq -2$,
\item[b)] $\Lambda''(x)>0$ if  $-2<x< 2$,
\item[c)] $\Lambda'(x)$ is constant if  $x\geq 2$.
\end{itemize}
Define $v_m(z):=C\,\dfrac{ma_m}{\epsilon_m^m} \Lambda\left(\log |z|^2+2\log \epsilon_m\right)$ for every $m=1,2,\ldots$, where $C>0$ will be chosen later. Then, the function $v_m$ is subharmonic on $\mathbb C$ for every $m=1,2,\ldots$. Moreover, we obtain the following lemma.
\begin{lemma} \label{L2} There exists a positive constant $C'>0$ such that
$$
\Delta v_m(z)\geq C' \dfrac{m a_m\epsilon_m^2}{\epsilon_m^m}
$$
for every $m=1,2,\ldots$ and for all $z\in \mathbb C$ with  $\dfrac{1}{2\epsilon_m}<|z|<\dfrac{1}{\epsilon_m}$.
\end{lemma}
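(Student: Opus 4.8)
The plan is to compute the Laplacian of $v_m$ explicitly using the identity $\Delta = 4\,\partial^2/\partial z\partial\bar z$ and the fact that $\log|z|^2$ is harmonic on $\mathbb C^*$. Writing $\psi(x) := \Lambda(x + 2\log\epsilon_m)$ and $x = \log|z|^2$, we have $v_m = C\,\frac{ma_m}{\epsilon_m^m}\,\psi(x)$, and the chain rule gives
\[
\Delta v_m(z) = C\,\frac{ma_m}{\epsilon_m^m}\cdot\psi''(x)\cdot\bigl|\nabla x\bigr|^2 = C\,\frac{ma_m}{\epsilon_m^m}\cdot\Lambda''\bigl(\log|z|^2 + 2\log\epsilon_m\bigr)\cdot\frac{4}{|z|^2},
\]
since $\Delta(\log|z|^2) = 0$ on $\mathbb C^*$ and $\bigl|\partial(\log|z|^2)/\partial z\bigr|^2 = 1/|z|^2$. (Here I am using $\Delta f = 4 f_{z\bar z}$ applied to $f = g\circ h$ with $h = \log|z|^2$ harmonic.)

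Next I would localize to the annulus $\frac{1}{2\epsilon_m} < |z| < \frac{1}{\epsilon_m}$. On this set, $\log|z|^2 + 2\log\epsilon_m$ ranges over the interval $\bigl(\log\frac14,\,0\bigr) = (-2\log 2,\,0) \subset (-2,2)$, which is precisely the interval where property (b) of $\Lambda$ guarantees $\Lambda'' > 0$. Since $\Lambda''$ is continuous and strictly positive on the compact subinterval $[-2\log 2,\,0]$, it attains a positive minimum there; call it $c_0 := \min_{x\in[-2\log 2,0]}\Lambda''(x) > 0$. Moreover, on the same annulus $\frac{1}{|z|^2} \geq \epsilon_m^2$. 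Combining these bounds,
\[
\Delta v_m(z) \;\geq\; C\,\frac{ma_m}{\epsilon_m^m}\cdot c_0\cdot 4\epsilon_m^2 \;=\; 4Cc_0\,\frac{ma_m\epsilon_m^2}{\epsilon_m^m},
\]
so the lemma holds with $C' := 4Cc_0$, which is independent of $m$.

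There is no real obstacle here; the only point requiring a little care is matching the scales so that the argument of $\Lambda$ lands inside $(-2,2)$ on the relevant annulus — this is exactly why the shift $2\log\epsilon_m$ was built into the definition of $v_m$ and why the cutoff in $u_m$ was set at $|z|\sim 1/\epsilon_m$. One should also note that on this annulus $v_m$ is smooth (we stay away from $z = 0$, where $\log|z|^2$ is singular, but there $\Lambda$ vanishes identically by property (a) once $|z|$ is small enough, so $v_m$ extends smoothly), and record that $C'$ depends only on the fixed function $\Lambda$ and the eventual choice of $C$, not on $m$, which is what is needed when these estimates are later summed against Lemma \ref{L1}.
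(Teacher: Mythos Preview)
Your proof is correct and follows essentially the same approach as the paper: compute $\partial^2 v_m/\partial z\partial\bar z$ via the chain rule, drop the first-order term because $\log|z|^2$ is harmonic, use $|\partial_z\log|z|^2|^2=1/|z|^2\ge\epsilon_m^2$ on the annulus, and invoke the strict positivity of $\Lambda''$ on the relevant compact subinterval of $(-2,2)$. If anything, you are slightly more explicit than the paper in identifying the interval $[-2\log 2,0]$ and in tracking the factor of $4$ from $\Delta=4\,\partial_z\partial_{\bar z}$.
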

\begin{proof}
A direct computation shows that
\begin{align*}
\frac{\partial^2 v_m}{\partial z\partial \bar z}(z)&=C\,\dfrac{m a_m}{\epsilon_m^m} \Lambda''\left(\log |z|^2+2\log \epsilon_m\right) \left | \frac{\partial}{\partial z}\left(\log |z|^2 \right)\right |^2\\
&+C\,\dfrac{m a_m}{\epsilon_m^m} \Lambda'\left(\log |z|^2+2\log \epsilon_m\right) \frac{\partial^2}{\partial z\partial \bar z}\left(\log |z|^2\right)\\
&=C\,\dfrac{m a_m}{\epsilon_m^m} \Lambda''\left (\log |z|^2+2\log \epsilon_m\right) \frac{1}{|z|^2}\\
&\geq C\,\dfrac{m a_m\epsilon_m^2}{\epsilon_m^m}\Lambda''\left(\log |z|^2+2\log \epsilon_m\right) \\
&\gtrsim \dfrac{m a_m\epsilon_m^2}{\epsilon_m^m}
\end{align*}
for all $z\in \mathbb C$ with $\dfrac{1}{2\epsilon_m}<|z|<\dfrac{1}{\epsilon_m}$, where the positive constant is independent of $m$. The proof is complete.
\end{proof}
It follows from Lemmas \ref{L1} and \ref{L2} that if $C$ is chosen fixed and large enough, then $u_m+v_m$ are all subharmonic. Furthermore, we have the following lemmas.
\begin{lemma} \label{L3}
If $|z|<\dfrac{1}{e\epsilon_m}$, then $u_m(z)+v_m(z)={\mbox{Re}}\left(a_m z^m\right)$ for every $m=1,2,\ldots$.
\end{lemma}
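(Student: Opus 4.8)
The plan is to verify the identity directly by examining the two summands $u_m$ and $v_m$ separately on the disc $|z| < 1/(e\epsilon_m)$. For $u_m$, recall $u_m(z) = \chi(\epsilon_m^2|z|^2)\,{\mbox{Re}}(a_m z^m)$. When $|z| < 1/(e\epsilon_m)$ we have $\epsilon_m^2|z|^2 < 1/e^2 < 1/4$, so by the definition of the cut-off $\chi$ we get $\chi(\epsilon_m^2|z|^2) = 1$, hence $u_m(z) = {\mbox{Re}}(a_m z^m)$ on this disc. It therefore remains only to check that $v_m \equiv 0$ there.

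For $v_m$, recall $v_m(z) = C\,\frac{m a_m}{\epsilon_m^m}\Lambda(\log|z|^2 + 2\log\epsilon_m)$, and by property (a) of $\Lambda$ we have $\Lambda(x) = 0$ whenever $x \le -2$. So I would check that the argument $\log|z|^2 + 2\log\epsilon_m = \log(\epsilon_m^2|z|^2)$ is $\le -2$ precisely when $\epsilon_m^2|z|^2 \le e^{-2}$, i.e. when $|z| \le 1/(e\epsilon_m)$. Thus for $|z| < 1/(e\epsilon_m)$ we indeed have $v_m(z) = 0$, and combining with the computation for $u_m$ gives $u_m(z) + v_m(z) = {\mbox{Re}}(a_m z^m)$, as claimed.

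There is no real obstacle here; the lemma is a bookkeeping check that the thresholds built into $\chi$ (namely $t < 1/4$) and into $\Lambda$ (namely $x \le -2$) are simultaneously met on the stated disc, which is exactly why the radius $1/(e\epsilon_m)$ was chosen — it is the smaller of the two radii $1/(2\epsilon_m)$ (from $\chi$, since $1/e^2 < 1/4$ this is automatic) and $1/(e\epsilon_m)$ (from $\Lambda$). One should simply present the two one-line verifications and conclude. The only point to state carefully is the elementary inequality $1/e^2 < 1/4$, which guarantees the $\chi$ term is still identically $1$ on the (slightly smaller) disc dictated by the $\Lambda$ term.
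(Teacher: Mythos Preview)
Your argument is correct and follows exactly the paper's approach: the paper's proof simply asserts that $u_m(z)=\mathrm{Re}(a_m z^m)$ and $v_m(z)=0$ on the disc $|z|<1/(e\epsilon_m)$, and you supply the explicit threshold checks ($\epsilon_m^2|z|^2<1/e^2<1/4$ for $\chi$, and $\log(\epsilon_m^2|z|^2)\le -2$ for $\Lambda$) that justify those assertions.
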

\begin{proof}
Fix a positive integer $m$. Then, $u_m(z)={\mbox{Re}}\left(a_m z^m\right)$ and $v_m(z)=0$ for all $z\in \mathbb C$ with  $|z|<\dfrac{1}{e \epsilon_m}$. Therefore, the proof follows.
\end{proof}
\begin{lemma} \label{L4}
The sums $\sum\limits_{m=1}^\infty u_m+v_m$ are uniformly convergent on compact sets in any $\mathcal C^k$ norm.
\end{lemma}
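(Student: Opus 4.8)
The plan is to prove the stronger quantitative statement that for every compact set $K\subset\mathbb{C}$ and every integer $k\ge 0$ one has $\sum_{m=1}^\infty \|u_m+v_m\|_{\mathcal{C}^k(K)}<\infty$; by the Weierstrass $M$-test this forces the partial sums to converge in the $\mathcal{C}^k(K)$-norm, and since $k$ is arbitrary the limit is automatically $\mathcal{C}^\infty$-smooth. Since $u_m,v_m$ are real-valued it is enough to bound the mixed derivatives $\partial_z^a\partial_{\bar z}^b(u_m+v_m)$ with $a+b\le k$, uniformly on $K$. The arithmetic input that makes everything converge is the hypothesis $\epsilon_m\ge\max\{m,a_m^{2/m}\}$: it gives $\epsilon_m^{m/2}\ge a_m$ and $\epsilon_m^{m/2}\ge m^{m/2}$, hence $\epsilon_m^{\,m-j}\ge a_m\, m^{m/2-j}$ whenever $m\ge 2j$, so any expression of the form $a_m\, m^{j}\,\epsilon_m^{-(m-j)}$ is $\le m^{\,2j-m/2}$ for $m\ge 2j$, which is summable in $m$. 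I would isolate this observation as a preliminary computation.

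For the $u_m$-part I would use that $u_m$ is supported in $\{|z|\le 1/\epsilon_m\}$ (by the support of $\chi$), so it suffices to estimate there. Differentiating $u_m=\chi(\epsilon_m^2|z|^2)\,\mathrm{Re}(a_m z^m)$ by the Leibniz rule, a derivative of order $k$ is a finite sum of products of a derivative of $\chi(\epsilon_m^2|z|^2)$ of some order $k_1$ with a derivative of $\mathrm{Re}(a_m z^m)$ of order $k_2=k-k_1$. Using that the first and mixed second derivatives of $\epsilon_m^2 z\bar z$ are $\epsilon_m^2\bar z,\ \epsilon_m^2 z,\ \epsilon_m^2$ and that all its higher derivatives vanish, one gets on the support that the $\chi$-factor contributes $O(\epsilon_m^{k_1})$, while the polynomial factor contributes $O(a_m m^{k_2}|z|^{m-k_2})=O(a_m m^{k_2}\epsilon_m^{-(m-k_2)})$ (and is zero when $k_2>m$). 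Multiplying gives $\|u_m\|_{\mathcal{C}^k(K)}\lesssim_k a_m\, m^{k}\,\epsilon_m^{-(m-k)}$, summable by the preliminary remark.

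For the $v_m$-part I would first observe that $v_m\equiv 0$ on $\{|z|<1/(e\epsilon_m)\}$, because there the argument of $\Lambda$ is $<-2$; so on a fixed $K\subset\{|z|\le R\}$ only the region $\{1/(e\epsilon_m)\le|z|\le R\}$ matters. Applying the chain rule to $v_m=C\,\frac{ma_m}{\epsilon_m^m}\Lambda(\log|z|^2+2\log\epsilon_m)$, a $k$-th derivative is $\frac{ma_m}{\epsilon_m^m}$ times a finite sum of terms $\Lambda^{(\ell)}(\log|z|^2+2\log\epsilon_m)$ multiplied by products of derivatives of $\log|z|^2$ of total order $k$; since a $j$-th derivative of $\log|z|^2$ is $O(|z|^{-j})$, such a product is $O(|z|^{-k})=O(\epsilon_m^{k})$ on the relevant region, and the $\Lambda^{(\ell)}$-factor is bounded for $\ell\ge 1$ while $\Lambda$ itself grows at most linearly, giving $|\Lambda^{(\ell)}(\cdots)|\lesssim_R 1+\log\epsilon_m\le 2\epsilon_m$. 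Hence $\|v_m\|_{\mathcal{C}^k(K)}\lesssim_{k,R} a_m\, m\,\epsilon_m^{-(m-k-1)}$, again summable, and adding the two estimates completes the argument. The only delicate point is the behavior as $z\to 0$, where the supports of $u_m$ and $v_m$ collapse and $v_m$ picks up a logarithmic singularity; the resolution, as above, is that the super-exponential smallness of $\frac{m a_m}{\epsilon_m^{m}}$ forced by $\epsilon_m\ge\max\{m,a_m^{2/m}\}$ dominates all the polynomial-in-$\epsilon_m$ and $\log\epsilon_m$ losses, so I expect the bookkeeping, rather than any genuine difficulty, to be the main work.
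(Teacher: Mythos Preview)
Your proposal is correct and follows essentially the same route as the paper: split into the $u_m$- and $v_m$-parts, use that $\mathrm{supp}\,u_m\subset\{|z|\le 1/\epsilon_m\}$ together with Leibniz to bound $\|u_m\|_{\mathcal C^k}$, use that $v_m$ vanishes on $\{|z|<1/(e\epsilon_m)\}$ together with the chain rule and the linear growth of $\Lambda$ to bound $\|v_m\|_{\mathcal C^k}$, and then invoke $a_m\le\epsilon_m^{m/2}$, $\epsilon_m\ge m$ and the Weierstrass $M$-test. Your write-up is in fact a bit more careful than the paper's: you obtain the sharper exponent $\epsilon_m^{k}$ rather than $\epsilon_m^{2k}$ for the $u_m$-bound, and you explicitly track the $\log\epsilon_m$ loss coming from the linear growth of $\Lambda$ in the $k=0$ case of the $v_m$-bound, which the paper glosses over.
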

\begin{proof} 
Let $K$ be a fixed compact subset in $\mathbb C$ and fix a nonnegative integer $k$. Since $\supp (u_m)\subset \{z\in \mathbb C\colon  |z|\leq1/\epsilon_m\}$ for every $m=1,2,\ldots$, a computation shows that
\begin{align*}
\sup_{z\in K}\left |\frac{\partial^k u_m(z)}{\partial z^j \partial\bar z^{k-j}}\right | &=\sup_{  |z| \leq 1/\epsilon_m}\left |\frac{\partial^k u_m(z)}{\partial z^j \partial \bar z^{k-j}}\right |\\
&\leq  C_k  \dfrac{ a_m\epsilon_m^{2k}}{\epsilon_m^m}
\end{align*}
for all $m\geq k$ and $0\leq j\leq k$, where $C_k$ is a positive constant depending on $k$. Notice that $a_m\leq \epsilon_m^{m/2}$ and $\epsilon_m \geq m$ for every $m=1,2,\ldots$. Therefore, by Weierstrass $M$-test the following series $\sum\limits_{m=1}^\infty \frac{\partial^k u_m(z)}{\partial z^j \partial\bar z^{k-j}}$ are uniformly convergent on any compact subsets of $\mathbb C$ for any nonnegative integers $k,j$.

On the other hand,  since $\Lambda(x)\leq C_1( |x|+1) $ for all $x\in \mathbb R$, where $C_1>0$ is a constant and $\supp (v_m)\subset \{z\in \mathbb C\colon  |z|\geq \dfrac{1}{e \epsilon_n}\}$, it follows that
\begin{align*}
\sup_{z\in K}\left |\frac{\partial^k v_m(z)}{\partial z^j \partial\bar z^{k-j}}\right | &=\sup_{  |z| \geq 1/(e \epsilon_m ) }\left |\frac{\partial^k v_m(z)}{\partial z^j \partial \bar z^{k-j}}\right |\\
&\leq  {\widetilde C}_k \dfrac{m a_m\epsilon_m^k}{\epsilon_m^m}
\end{align*}
for all $m\geq k$ and $0\leq j\leq k$, where ${\widetilde C}_k$ is a positive constant depending on $k$ and $K$.  Note that $a_m\leq \epsilon_m^{m/2}$ and $\epsilon_m \geq m$ for every $m=1,2,\ldots$. Hence, by Weierstrass $M$-test the following series $\sum\limits_{m=1}^\infty \frac{\partial^k v_m(z)}{\partial z^j \partial\bar z^{k-j}}$ are also uniformly convergent on any compact subsets of $\mathbb C$ for any nonnegative integers $k,j$.
 
 Altogether, the proof is now complete.
\end{proof}
The following corollary immediately follows from Lemma \ref{L4}.
\begin{corollary} \label{L5}
$\sum\limits_{m=1}^\infty u_m+v_m$ is a $\mathcal C^\infty$ function and any derivative at $0$ is the sum of the corresponding derivatives of the $u_m+v_m$. 
\end{corollary}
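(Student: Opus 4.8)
The plan is to obtain this as an immediate packaging of Lemma \ref{L4} via the classical theorem on term-by-term differentiation of uniformly convergent series. Fix a compact set $K\subset\CC$ and a nonnegative integer $k$. By Lemma \ref{L4}, for each $0\leq j\leq k$ the series $\sum_{m=1}^\infty \frac{\partial^k (u_m+v_m)}{\partial z^j\partial\bar z^{k-j}}$ converges uniformly on $K$; equivalently, the partial sums $S_N:=\sum_{m=1}^N (u_m+v_m)$ form a Cauchy sequence in the $\mathcal C^k(K)$ norm.

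Next I would run the standard induction on $k$. The case $k=0$ is just the uniform convergence of $S_N$ to a continuous limit $S$ on $K$. For the inductive step, recall that if a sequence of $\mathcal C^1$ functions converges uniformly and each of its first-order partial derivatives converges uniformly, then the limit is $\mathcal C^1$ and its derivative is the limit of the derivatives. Applying this to $S_N$, and using that the Wirtinger operators $\partial/\partial z$, $\partial/\partial\bar z$ span the same space of first-order operators as $\partial/\partial x$, $\partial/\partial y$, we get $S\in\mathcal C^1(K)$ with $\partial^\alpha S=\sum_{m=1}^\infty \partial^\alpha(u_m+v_m)$ for $|\alpha|\leq 1$. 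Iterating (differentiating $S$ in exactly the way one differentiates each partial sum) yields $S\in\mathcal C^k(K)$ with $D^\alpha S=\sum_{m=1}^\infty D^\alpha(u_m+v_m)$ for every multi-index $|\alpha|\leq k$. Since $k$ and $K$ were arbitrary, $S=\sum_{m=1}^\infty (u_m+v_m)$ is $\mathcal C^\infty$ on $\CC$, and in particular every derivative of $S$ at $0$ equals the sum of the corresponding derivatives of the $u_m+v_m$ at $0$.

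There is essentially no obstacle here beyond bookkeeping: Lemma \ref{L4} has already done the analytic work of producing the uniform $\mathcal C^k$ bounds through the Weierstrass $M$-test, and the present statement is merely the routine conclusion of smoothness and legitimacy of term-by-term differentiation. The one point worth a word of care is that $u_m,v_m$ are functions on $\CC\cong\RR^2$ while Lemma \ref{L4} phrases convergence using the mixed derivatives $\partial^k/\partial z^j\partial\bar z^{k-j}$; since these are invertible linear combinations of the ordinary partials $\partial^k/\partial x^a\partial y^b$, uniform convergence in one description is equivalent to uniform convergence in the other, so the classical differentiation theorem applies verbatim.
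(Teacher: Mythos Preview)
Your proposal is correct and matches the paper's approach: the paper simply states that this corollary ``immediately follows from Lemma \ref{L4},'' and what you have written is precisely the routine unpacking of that claim via the standard theorem on term-by-term differentiation of $\mathcal{C}^k$-uniformly convergent series. Your remark about the equivalence between Wirtinger and real partial derivatives is a fair point of bookkeeping, but otherwise there is nothing to add.
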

\begin{proof}[Proof of Proposition \ref{P1}]
Define $f(z)=\sum\limits_{m=1}^\infty u_m(z)+v_m(z)$. Then, $f$ is a $\mathcal{C}^\infty$-smooth subharmonic function on $\mathbb C$. Moreover, by Lemma \ref{L3} and Corollary \ref{L5}, the Taylor series of $f$ at the origin is exactly $\sum\limits_{m=1}^\infty {\mbox{Re}}(a_m z^m)$. 
\end{proof}
\section{Pseudoconvex hypersurface of Bloom-Graham infinite type}\label{S3}
In this section, we shall give proofs of Theorem \ref{T4}, Theorem \ref{T1}, and Corollary \ref{Cor1}.
\begin{proof}[Proof of Theorem \ref{T4}] 
Fix an increasing sequence $\{k_s\}_{s=1}^\infty$ such that $k_{s+1}-k_s>n$ for every $s=1,2,\ldots$. Let $\{a^1_m\}_{m=1}^\infty, \ldots,\{a^{n-1}_m\}_{m=1}^\infty,\{a^0_m=a^n_m\}_{m=1}^\infty$ be $n$ increasing sequences of positive real numbers satisfying:
\begin{align} \label{5-10-2019-1}
a_{k_1}^1> (k_1)^{k_1}\Big((k_1)^{2k_1} \big(\sum_{m=1}^{k_1-1} a^1_m+\sum_{j\ne 1}\sum_{m=1}^{k_1} a^j_m\big)+1\Big)&\nonumber;\\
a_{k_2}^2> (k_2)^{k_2}\Big((k_2)^{2k_2} \big(\sum_{m=1}^{k_2-1} a^2_m+\sum_{j\ne 2}\sum_{m=1}^{k_2} a^j_m\big)+1\Big)&\nonumber;\\
\cdots\nonumber  \\
a_{k_s}^{s(\mathrm{mod}\ n)}> (k_s)^{k_s} \Big( (k_s)^{2 k_s} \big(\sum_{m=1}^{k_s-1} a^{s(\mathrm{mod}\ n)}_m+\sum_{j\ne s(\mathrm{mod}\ n)}\;\sum_{m=1}^{k_s} a^j_m\big)+1\Big)&;\\
\cdots \nonumber
\end{align}
Denote by $\{\epsilon^j_m\}_{m=1}^\infty, 1\leq j\leq n $, $n$ sequences of positive real numbers satisfying $\epsilon^j_m\geq \max\{m,(a^j_m)^{2/m}\}, 1\leq j\leq n$.

For each $j=1,\ldots,n$, let $f_j$ be a $\mathcal{C}^\infty$-smooth subharmonic function on $\mathbb C$ constructed in the proof of Proposition \ref{P1} for a pair of sequences $\{a^j_m\}_{m=1}^\infty$ and $\{\epsilon^j_m\}_{m=1}^\infty$. That is, the Taylor series at the origin of $f_j$ is $\sum\limits_{m=1}^\infty  {\mbox{Re}} \left(a^j_m z^m\right)$, $j=1,2,\ldots, n$.

We now define a hypersurface germ $M$ at $p=0$ by setting
\[
M=\left\{(z,w)\in\mathbb C^{n+1}\colon\tilde  \rho(z,w):={\mbox{Re}}(w)+f_1(z_1)+\cdots+f_n(z_n)=0\right\}.
\]
Since $f_k,1\leq k\leq n$, are subharmonic on $\mathbb C$, $M$ is pseudoconvex. Moreover, $(M,0)$ is of infinite type in the sense of Bloom-Graham.

 Indeed, for each $m=1,2,\ldots$, consider an $n$-dimensional complex submanifold $X_m$ in $\mathbb C^{n+1}$ defined by
 $$
 X_m=\left\{(z,w)\in \mathbb C^{n+1}\colon w=-\sum_{k=1}^m a^1_k z_1^k-\cdots-\sum_{k=1}^m a^n_k z_n^k , |z_j|<\frac{1}{e \epsilon^j_m },1\leq j\leq n\right\}.
 $$
 Then $\rho\mid_{X_m}(z,w)=o(|z_1|^m)+\cdots+o(|z_n|^m)$ vanishes to order $\geq m$ at $p=0$. Consequently, $X_m$ is tangent to $M$ at $0$ to order $\geq m$. This yields that $M$ is of infinite type in the sense of Bloom-Graham at $p=0$.

We now prove that there does not exist a nonconstant holomorphic curve $\gamma_\infty:=(h_1,\ldots, h_n; g ): (\mathbb C,0)\to (\mathbb C^{n+1},0)$, where $g, h_j, 1\leq j\leq n$, are holomorphic functions on a neighborhood of the origin in $\mathbb C$, such that $\nu(\rho\circ\gamma_\infty)=+\infty$, that is, 
\begin{align}\label{4-2018-1}
\rho\circ\gamma_\infty(t)={\mbox{Re}}(g(t))+f_1(h_1(t))+\cdots+f_n(h_n(t)))=o(t^\infty).
\end{align}
 Suppose otherwise that there exists such a holomorphic curve. Without loss of generality, we may assume that $g, h_j, 1\leq j\leq n$, are all holomorphic on the unit disk $\Delta:=\{z\in \mathbb C\colon |z|<1\}$.  
 
 We now consider the following cases
 
 \noindent
 {\bf Case 1.}  $h_j\equiv 0$ for  $1\leq j\leq n$. In this case we have $f_j(h_j(t))\equiv 0$ for $1\leq j\leq n$. Therefore, it follows from (\ref{4-2018-1}) that 
 $$
 {\mbox{Re}}(g(t))=o(t^\infty).
 $$
This implies that $g\equiv 0$, which is impossible. 

 \noindent
 {\bf Case 2.}  $g\equiv 0$. Then, (\ref{4-2018-1}) becomes
 \begin{align} \label{4-2018-20-1}
f_1(h_1(t))+\cdots+f_n(h_n(t)))=o(t^\infty).
\end{align}
 Expanding $h_1,\ldots, h_n$ into the Taylor series at $t=0$, we obtain
  \begin{align*}
h_j(t)=\sum_{m=1}^\infty \alpha^j_m t^m, ~\alpha^j_m\in \mathbb C,~1\leq j\leq n.
\end{align*}
Hence, by (\ref{4-2018-20-1}) one must have  
   \begin{align} \label{4-2018-21-1}
a^1_1\alpha^1_1+\cdots+a^n_1\alpha^n_1&=0\nonumber;\\
a^1_1\alpha^1_2+\cdots+a^n_1\alpha^n_2+a^1_2 (\alpha^1_1)^2+\cdots+a^n_2(\alpha^n_1)^2&=0\nonumber;\\
\cdots\nonumber  \\
\sum_{m=1}^k a^1_m \sum_{\substack{n_1+\cdots+n_m=k\\ n_1,\ldots,n_m\geq 1}}\alpha^1_{n_1}\cdots\alpha^1_{n_m}+\cdots+ \\
+\sum_{m=1}^k a^n_m \sum_{\substack{n_1+\cdots+n_m=k\\ n_1,\ldots, n_m\geq 1}}\alpha^n_{n_1}\cdots\alpha^n_{n_m}&=0;\nonumber\\
\cdots \nonumber
\end{align}
  
Since $h_j, 1\leq j\leq n$, are all holomorphic on the unit disk $\Delta$, without loss of generality we can assume that $|\alpha^j_m|\leq 1$ for every $m\geq 1$ and $1\leq j\leq n$. Let us set $m^*=\min_{1\leq j\leq n}\{\nu(h_j)\}\geq 1$. Then, one has
 \begin{align}\label{22-10-1}
 \Big| \sum_{\substack{n_1+\cdots+n_m=k m^*\\ n_1,\ldots, n_m\geq m^*}}\alpha^j_{n_1}\cdots\alpha^j_{n_m}\Big|\leq (k m^*)^k\leq k^{2k}
\end{align}
  for every $k\geq m^*$ and $1\leq j\leq n$ and moreover,  there exists a $q\in \{1,2,\ldots,n\}$ such that $h_q\not \equiv 0$ and $0<|\alpha^q_{m^*}|\leq1$. Furthermore, by replacing $k$ by $k.m^*$, (\ref{4-2018-21-1}) yields that
 \begin{equation}\label{22-10-2}
 \begin{split}
\sum_{m=1}^k a^1_m \sum_{\substack{n_1+\cdots+n_m=km^*\\ n_1,\ldots,n_m\geq m^*}}\alpha^1_{n_1}\cdots\alpha^1_{n_m}+\cdots+ \\
+\sum_{m=1}^k a^n_m \sum_{\substack{n_1+\cdots+n_m=km^*\\ n_1,\ldots, n_m\geq m^*}}\alpha^n_{n_1}\cdots\alpha^n_{n_m}&=0, ~\forall\, k\geq 1.
\end{split}
\end{equation}
Therefore, by (\ref{22-10-1}) and (\ref{22-10-2}), one obtains
 \begin{align*}
 |\alpha^q_{m^*}|^k   |a^q_k|\leq k^{2k} \Big(\sum_{m=1}^{k-1} a^q_m+\sum_{j\ne q}\sum_{m=1}^k a^j_m\Big),~\forall\, k\geq m^*.
 \end{align*}
Now let us choose $k_0\in \mathbb N$ such that $k_0> \max\big\{\dfrac{1}{|\alpha^q_{m^*}|}, m^*\big\}$. Then, one gets
 \begin{align*}
|a^q_k|\leq  k^{3k}\Big(\sum_{m=1}^{k-1} a^q_m+\sum_{j\ne q}\sum_{m=1}^k a^j_m \Big), ~\forall\, k\geq k_0.
\end{align*}
This contradicts the condition (\ref{5-10-2019-1}).

  \noindent
 {\bf Case 3.}  $g\not \equiv 0, h_j\not\equiv 0$ for some $j\in\{1,2,\ldots,n\}$. Then, (\ref{4-2018-1}) becomes
 \begin{align*}
{\mbox{Re}}(g(t))+f_1(h_1(t))+\cdots+f_n(h_n(t)))=o(t^\infty).
\end{align*}
 Expanding $g, h_1,\ldots, h_n$ into the Taylor series at $t=0$, we have
  \begin{align*}
h_j(t)&=\sum_{m=1}^\infty \alpha^j_m t^m,~\alpha^j_m\in \mathbb C,~1\leq j\leq n;\\
g(t)&=\sum_{m=1}^\infty \gamma_m t^m,~\gamma_m\in \mathbb C.
\end{align*}
Hence, by (\ref{4-2018-1}) one must have  
  \begin{align} \label{4-2018-21-3}
\gamma_1+a^1_1\alpha^1_1+\cdots+a^n_1\alpha^n_1&=0\nonumber;\\
\gamma_2+a^1_1\alpha^1_2+\cdots+a^n_1\alpha^n_2+a^1_2 (\alpha^1_1)^2+\cdots+a^n_2(\alpha^n_1)^2&=0\nonumber;\\
\cdots\nonumber  \\
\gamma_k+\sum_{m=1}^k a^1_m \sum_{\substack{n_1+\cdots+n_m=k\\ n_1,\ldots, n_m\geq 1}}\alpha^1_{n_1}\cdots\alpha^1_{n_m}+\cdots+&\\
+\sum_{m=1}^k a^n_m \sum_{\substack{n_1+\cdots+n_m=k\\ n_1,\ldots, n_m\geq 1}}\alpha^n_{n_1}\cdots\alpha^n_{n_m}&=0;\nonumber\\
\cdots \nonumber
\end{align}

 Since $g, h_j, 1\leq j\leq n$, are all holomorphic on the unit disk $\Delta$, without loss of generality we can assume that $|\alpha^j_m|\leq 1,|\gamma_m|\leq 1$ for every $m\geq 1$ and $1\leq j\leq n$. As in Case $2$, we set $m^*=\min_{1\leq j\leq n}\{\nu(h_j)\}\geq 1$. Then, one also has
 \begin{align}\label{22-10-3}
 \Big | \sum_{\substack{n_1+\cdots+n_m=k m^*\\ n_1,\ldots, n_m\geq m^*}}\alpha^j_{n_1}\cdots\alpha^j_{n_m}\Big |\leq (k m^*)^k\leq k^{2k}
\end{align}
  for every $k\geq m^*$ and $1\leq j\leq n$ and there exists a $q\in \{1,2,\ldots,n\}$ such that $h_q\not \equiv 0$ and $0<|\alpha^q_{m^*}|\leq1$. Moreover, by replacing $k$ by $k.m^*$, (\ref{4-2018-21-3}) yields that
 \begin{equation}\label{22-10-4}
 \begin{split}
\gamma_{km^*}+\sum_{m=1}^k a^1_m \sum_{\substack{n_1+\cdots+n_m=km^*\\ n_1,\ldots,n_m\geq m^*}}\alpha^1_{n_1}\cdots\alpha^1_{n_m}+\cdots+ \\
+\sum_{m=1}^k a^n_m \sum_{\substack{n_1+\cdots+n_m=km^*\\ n_1,\ldots, n_m\geq m^*}}\alpha^n_{n_1}\cdots\alpha^n_{n_m}&=0, ~\forall\, k\geq 1.
\end{split}
\end{equation}
Therefore, by (\ref{22-10-3}) and (\ref{22-10-4}), we get
 \begin{align*}
 |\alpha^q_{m^*}|^k   |a^q_k|\leq k^{2k} \Big(\sum_{m=1}^{k-1} a^q_m+\sum_{j\ne q}\sum_{m=1}^k a^j_m\Big)+1,~\forall\, k\geq m^*.
 \end{align*}
Now we also choose $k_0\in \mathbb N$ such that $k_0> \max\big\{\dfrac{1}{|\alpha^q_{m^*}|}, m^*\big\}$. Then, one obtains 
\begin{align*}
|a^q_k|\leq k^k \Big( k^{2k}\big(\sum_{m=1}^{k-1} a^q_m+\sum_{j\ne q}\sum_{m=1}^k a^j_m\big)+1 \Big), ~\forall\, k\geq k_0.
\end{align*}
This again contradicts the condition (\ref{5-10-2019-1}).  
  
  Altogether, the proof is complete.
  \end{proof}
  
\begin{proof}[Proof of Corollary \ref{Cor1}]     
  Let $M$ be the smooth pseudoconvex real hypersurface given in the proof of Theorem \ref{T4}. Then $M$ is of Bloom-Graham infinite type at $p=0$ and moreover it does not admit any nonconstant holomorphic curve tangent to $M$ at $p=0$ to infinite order. We shall show that $M$ does also not admit any complex submanifold $X_\infty$ of codimension one in $\mathbb C^{n+1}$ tangent to $M$ at $0$ to infinite order. That is, $\tilde \rho\mid_{X_\infty}$ vanishes to infinite order at $p=0$.

Indeed, suppose otherwise. Then, $\tilde \rho\mid_{X_\infty\cap\{z_2=\cdots=z_n=0\}}$ vanishes to infinite order at $p=0$. Note that $X_\infty\cap\{z_2=\cdots=z_n=0\}$ is locally represented as the graph of a holomorphic curve. Therefore, there exists a nonconstant holomorphic curve $\gamma_\infty: (\mathbb C,0)\to (\mathbb C^{n+1},0)$ tangent to $M$ at $0$ to infinite order, which is a contradiction. Hence, this completes the proof. 
\end{proof}
In order to give a proof of Theorem \ref{T1}, we need the following lemma.
 \begin{lemma}\label{BL-1} Let $n\geq 1$. If $(M, p)$ is a smooth real hypersurface germ of Bloom-Graham infinite type in $\mathbb C^{n+1}$, then $(M, p)$ is of D'Angelo infinite type.
 \end{lemma}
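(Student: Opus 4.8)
The plan is to unwind the definitions of the two notions of infinite type and produce, from a sequence of complex submanifolds witnessing Bloom--Graham infinite type, a sequence of holomorphic curves witnessing D'Angelo infinite type. Recall that $(M,p)$ being of Bloom--Graham infinite type means that for every positive integer $m$ there is a codimension-one complex submanifold $X_m$ of $\mathbb C^{n+1}$ whose order of tangency with $M$ at $p$ is at least $m$; concretely, if $r$ is a local defining function for $M$, then $r|_{X_m}$ vanishes to order $\geq m$ at $p$. On the D'Angelo side, we must exhibit nonconstant holomorphic curves $\gamma_m:(\mathbb C,0)\to(\mathbb C^{n+1},p)$ with $\nu(r\circ\gamma_m)/\nu(\gamma_m)\to+\infty$.

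The key step is to take, for each $m$, a one-dimensional complex submanifold $C_m$ of $X_m$ through $p$ — for instance, after a linear change of coordinates writing $X_m$ locally as a graph $w=\varphi_m(z_1,\ldots,z_n)$ with $\varphi_m$ holomorphic and $\varphi_m(0)=0$, intersect with a generic complex line through the origin in the $z$-variables to obtain a nonconstant holomorphic curve $\gamma_m:(\mathbb C,0)\to(\mathbb C^{n+1},p)$ whose image lies in $X_m$ and which satisfies $\nu(\gamma_m)=1$ (one can always arrange the curve to have a simple parametrization, e.g.\ $t\mapsto(t v, \varphi_m(tv))$ for a suitable vector $v\in\mathbb C^n$). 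Since $\gamma_m$ maps into $X_m$ and $r|_{X_m}$ vanishes to order $\geq m$ at $p$, we get $r\circ\gamma_m=(r|_{X_m})\circ(\text{parametrization})$, hence $\nu(r\circ\gamma_m)\geq m$. Therefore $\nu(r\circ\gamma_m)/\nu(\gamma_m)\geq m\to+\infty$, which is exactly the D'Angelo infinite-type condition.

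The main point to be careful about — and the only place where anything could go wrong — is ensuring the resulting curve $\gamma_m$ is \emph{nonconstant} and that restricting $r|_{X_m}$ along a line does not accidentally raise, rather than preserve or lower, the vanishing order in a way that breaks the bound; in fact restriction to a subvariety can only keep the vanishing order $\geq m$, so this direction is safe, and nonconstancy is guaranteed because the graph parametrization $t\mapsto(tv,\varphi_m(tv))$ has nonzero derivative at $t=0$ in the $z$-directions. A minor technical wrinkle is that the defining function $r$ is only smooth, not real-analytic, so ``$r|_{X_m}$ vanishes to order $\geq m$'' must be read in the sense of vanishing of the Taylor jet along $X_m$; but this is precisely the content of the Bloom--Graham tangency condition and causes no difficulty for composing with a holomorphic parametrization. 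I expect the write-up to be short: essentially one paragraph reducing to the one-dimensional slice, plus the elementary observation on vanishing orders.
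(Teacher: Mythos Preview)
Your proposal is correct and follows essentially the same approach as the paper: from each codimension-one complex submanifold $X_m$ witnessing Bloom--Graham infinite type, pick a regular holomorphic curve $\gamma_m$ in $X_m$ through $p$ with $\nu(\gamma_m)=1$, and observe that $\nu(r\circ\gamma_m)\geq \mathrm{ord}_p(r|_{X_m})\geq m$. The paper's write-up is even terser---it simply asserts the existence of such a regular curve rather than spelling out the graph-plus-line construction you describe---but the argument is the same.
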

 \begin{proof}
 Suppose that $(M, p)$ is a smooth real hypersurface germ of Bloom-Graham infinite type in $\mathbb C^{n+1}$ and let $r$ be a local defining function for $M$ near $p$. By definition, it follows that there exists a sequence $\{X_m\}_{m=1}^\infty$ of codimension-one complex submanifolds of $\mathbb C^{n+1}$ such that $p\in X_m$ and
 $$
 ord_p(r\mid_{X_m})\geq m
 $$
 for every $m=1,2,\ldots$.
 
 For each $m=1,2,\ldots$, choose a regular holomorphic curve $\gamma_m:\Delta:=\{z\in \mathbb C\colon |z|<1\}\to X_m \subset \mathbb C^{n+1}$ such that $p=\gamma_m(0)$ and $\nu_0(\gamma_m)=1$. Then, $\nu_0(r\circ \gamma_m)\geq ord_p(r\mid_{X_m})\geq m$. This yields that the D'Angelo type of $M$ at $p$ equals $+\infty$. Therefore, the proof is complete.
 \end{proof}

\begin{proof}[Proof of Theorem \ref{T1}]  
Let $M$ be the smooth pseudoconvex real hypersurface given in the proof of Theorem \ref{T4}. Then $M$ is of Bloom-Graham infinite type at $p=0$. By Lemma \ref{BL-1}, $M$ is of D'Angelo infinite type at $p=0$. Hence, the proof follows from Theorem \ref{T4}.
\end{proof}
\begin{remark}\label{R1} Let $X$ be a real variety in $\mathbb C^n$ defined by $f_1(z_1)+\cdots+f_n(z_n)=0$, where $f_j, 1\leq j\leq n$, given in the proof of Theorem \ref{T4}. Following the proof of Theorem \ref{T4}, we conclude that $X$ does not admit any nonconstant holomorphic curve $\gamma: (\mathbb C, 0) \to (\mathbb C^n, 0)$ with convergent Taylor series tangent to $X$ at $0$ to infinite order. Moreover, let $\widetilde X$ be a formal real variety in $\mathbb C^n$ defined by $\tilde f_1(z_1)+\cdots+\tilde f_n(z_n)=0$, where $\tilde f_j, 1\leq j\leq n$, respectively, are the (divergent) Taylor series at the origin of smooth harmonic functions $ f_j, 1\leq j\leq n$, given in the proof of Theorem \ref{T4}. Following the proof of Theorem \ref{T4}, we also conclude that $\widetilde X$ does not  contain any nonconstant holomorphic curve $\gamma: (\mathbb C, 0) \to (\mathbb C^n,0)$ with convergent Taylor series.
\end{remark}

\begin{Acknowledgment}  Part of this work was done while the second author was visiting the Vietnam Institute for Advanced Study in Mathematics (VIASM). He would like to thank the VIASM for financial support and hospitality. We gratefully acknowledge the careful reading by the referees.
\end{Acknowledgment}


\begin{thebibliography}{99}

\bibitem[BG77]{BG77}  T. Bloom and I. Graham, A geometric characterization of points of type $m$ on real submanifolds of $\mathbb C^n$, J. Differential Geometry {\bf 12} (1977), no. 2, 171--182.
\bibitem[Cat83]{Cat83} D. Catlin, Necessary conditions for subellipticity of the $\bar \partial$-Neumann problem, Ann. of Math. (2) {\bf 117} (1983), no. 1, 147--171.
\bibitem[Cat84]{Cat84} D. Catlin,  Boundary invariants of pseudoconvex domains, Ann. of Math. (2) {\bf 120} (1984), no. 3, 529--586.

\bibitem[Cat87]{Cat87} D. Catlin, Subelliptic estimates for the $\bar \partial$-Neumann problem on pseudoconvex domains, Ann. of Math. (2) {\bf 126} (1987), no. 1, 131--191.
\bibitem[D'A82]{Da82} J. P. D'Angelo, Real hypersurfaces, orders of contact, and applications, Ann. Math.  {\bf 115} (1982), 615--637.
\bibitem[D'A93]{Da93} J. P. D'Angelo, Several complex variables and the geometry of real hypersurfaces, CRC Press, Boca Raton, 1993. 
\bibitem[DK99]{DK99} J. P. D'Angelo and J. J. Kohn, Subelliptic estimates and finite type, Math. Sci. Res. Inst. Publ. {\bf 37} (1999), 199--232.
\bibitem[FS12]{FS12} J. E. Forn{\ae}ss and B. Stens{\o}nes, Infinite type germs of real analytic pseudoconvex domains in $\mathbb C^3$, Complex Var. Elliptic Equ. {\bf 57} (2012), no. 6, 705--717. 
\bibitem[FLZ14]{FLZ14} J. E. Forn{\ae}ss, L. Lee and Y. Zhang, Formal complex curves in real smooth hypersurfaces, Illinois J. Math. {\bf 58} (2014), no. 1, 1--10.

\bibitem[KN15]{Kim-Ninh} K. -T. Kim and V. T. Ninh, On the tangential holomorphic vector fields vanishing at an infinite type point, Trans. Amer. Math. Soc. {\bf 367} (2015), no. 2, 867--885.
\bibitem[Ko05]{Ko05} M. Kol\'{a}\v{r}, Normal forms for hypersurfaces of finite type in $\mathbb C^2$, Math. Res. Lett. {\bf 12} (2005), 897--910. 
\bibitem[Lem86]{Lem86} L. Lempert, On the boundary regularity of biholomorphic mappings, Contributions to several complex variables, Aspects Math. E9 (1986), 193--215.

\bibitem[NC19]{NC19} T. K. S. Nguyen and V. T. Chu, A note on infinite type germs of a real hypersurface in $\mathbb C^2$, VNU Journal of Science: Mathematics--Physics 35 (2019), no. 2, 82--87.
\end{thebibliography}
\end{document}